\documentclass{amsart}
\usepackage{amsthm,amsfonts,amsmath,amscd,amssymb,latexsym,epsfig,mathrsfs}
\usepackage{appendix}
\usepackage{tikz-cd}
\usepackage{xcolor}
\usepackage[pdftex,pdfpagelabels,bookmarks,hyperindex,hyperfigures]{hyperref}
\hypersetup{
    colorlinks = true,
    linkcolor = {blue},
    citecolor={magenta},
   linkbordercolor = {white},
    linktocpage=true
}
\usepackage[non-sorted-cites]{amsrefs}

\newcommand{\cx}{{\mathbb C}}

\newcommand{\Spec}{\operatorname{Spec}}

\numberwithin{equation}{section}

\newtheorem{theorem}{Theorem}[section]
\newtheorem*{theorem*}{Theorem}

\newtheorem{lemma}[theorem]{Lemma}

\newtheorem{corollary}[theorem]{Corollary}

\newtheorem{proposition}[theorem]{Proposition}

\theoremstyle{remark}

\newtheorem{definition}[theorem]{Definition}

\newtheorem{example}[theorem]{Example}

\newcommand{\C}{{\mathbb{C}}}

\newcommand{\oN}{{\mathbb{N}}}

\newcommand{\oP}{{\mathbb{P}}}

\newcommand{\oR}{{\mathbb{R}}}

\newcommand{\sE}{{\mathcal{E}}}

\newcommand{\sM}{{\mathcal{M}}}   

\newcommand{\sQ}{{\mathcal{Q}}}

\newcommand{\sV}{{\mathcal{V}}}

\newcommand{\fK}{{\mathfrak{k}}}

\newcommand{\fU}{{\mathfrak{u}}}

\begin{document}

\title{Hyperquiver varieties}
\author{Roger Bielawski}
\address{Institut f\"ur Differentialgeometrie,
Leibniz Universit\"at Hannover,
Welfengarten 1, 30167 Hannover, Germany}



\begin{abstract} We discuss the recently introduced concept of hyperquivers, due to Muller, Nanda, and Seigal, from the point of view of K\"ahler and hyperk\"ahler quotients. As examples, we discuss the projective K\"ahler hyperquiver varietes $\sM$, corresponding to hyperquivers with a single hyperedge.
\end{abstract}

\maketitle

\thispagestyle{empty}

\section{Hyperquivers}

In a recent paper \cite{MNS} the notion of a quiver and its representations has been generalised to {\em hyperquivers}.
Briefly, a hyperquiver consists of two finite sets $\sV$ (vertices) and $\sE$ (hyperedges) together with {\em multivalued} maps $s,t:\sE\to \sV$ (sources and targets). A {\em representation} of a hyperquiver associates to each vertex $v$ a vector space $V(v)$ and to each hyperedge with sources $s_1,\dots,s_k\in \sV$ and targets $t_1,\dots,t_l\in \sV$, a tensor 
$$T\in V(s_1)^\ast\otimes\dots\otimes V(s_k)^\ast \otimes V(t_1)\otimes\dots\otimes  V(t_l).$$
The visual representation of hyperquivers in \cite{MNS} is somewhat unsatisfactory: the only way to distinguish between: a) the ordinary quiver with three vertices $\{1,2,3\}$ and two edges $(1\to 2)$, $(1\to 3)$, and b) the hyperquiver with with three vertices $\{1,2,3\}$ and one hyperedge $e$ with $s(e)=\{1\}$, $t(e)=\{2,3\}$, is to make the edges in a) of different colours. We therefore propose an alternative description in  terms of $2$-dimensional cobordisms.
\par
We define first an $n$-dimensional {\em manifold with multiboundary} as a (second-countable) Hausdorff topological space which is locally homeomorphic either to $\oR^n$ or to $\oR^{n-1}\times B_k$, where $B_k$ is the union of positive coordinate half-axes in some $\oR^k$, i.e.\ $B_k=\{(y_1,\dots,y_k)\in [0,\infty)^k; \text{at most one $y_i\neq 0$}\}$. 
A transition function is required to be smooth on each $\oR^{n-1}\times \{(0,\dots,0,y_i,0,\dots,0)\}$.
\par
The second local model can be viewed as the union of $k$ copies of $\oR^{n-1}\times [0,\infty)$,  the boundaries of which have been identified with each other. We therefore call $k$ the {\em multiplicity of a boundary component}.  
\par
We now define a hyperquiver as consisting of a disjoint union $\sV$ of finitely many oriented circles (``vertices") and a finite set $\sE$ of oriented $2$-dimensional surfaces with multiboundaries in $\sV$ (``hyperedges"). Here ``oriented" means oriented in the interior. In a neighbourhood of a boundary circle, a hyperedge looks like a number of tubes attached to that circle. For each of these tubes the circle is either  source or  target: it is the source (resp.\ target) if the orientation given by the circle and the inward (resp.\ outward) pointing normal vector (in that order) agrees with the given orientation of the interior of the tube. It is straightforward to verify that this definition is equivalent to the original one.
\begin{example} The following are three hyperquivers with one vertex and one hyperloop, giving three different representation spaces. 
\begin{figure}[h]
	\includegraphics[width=0.6\textwidth]{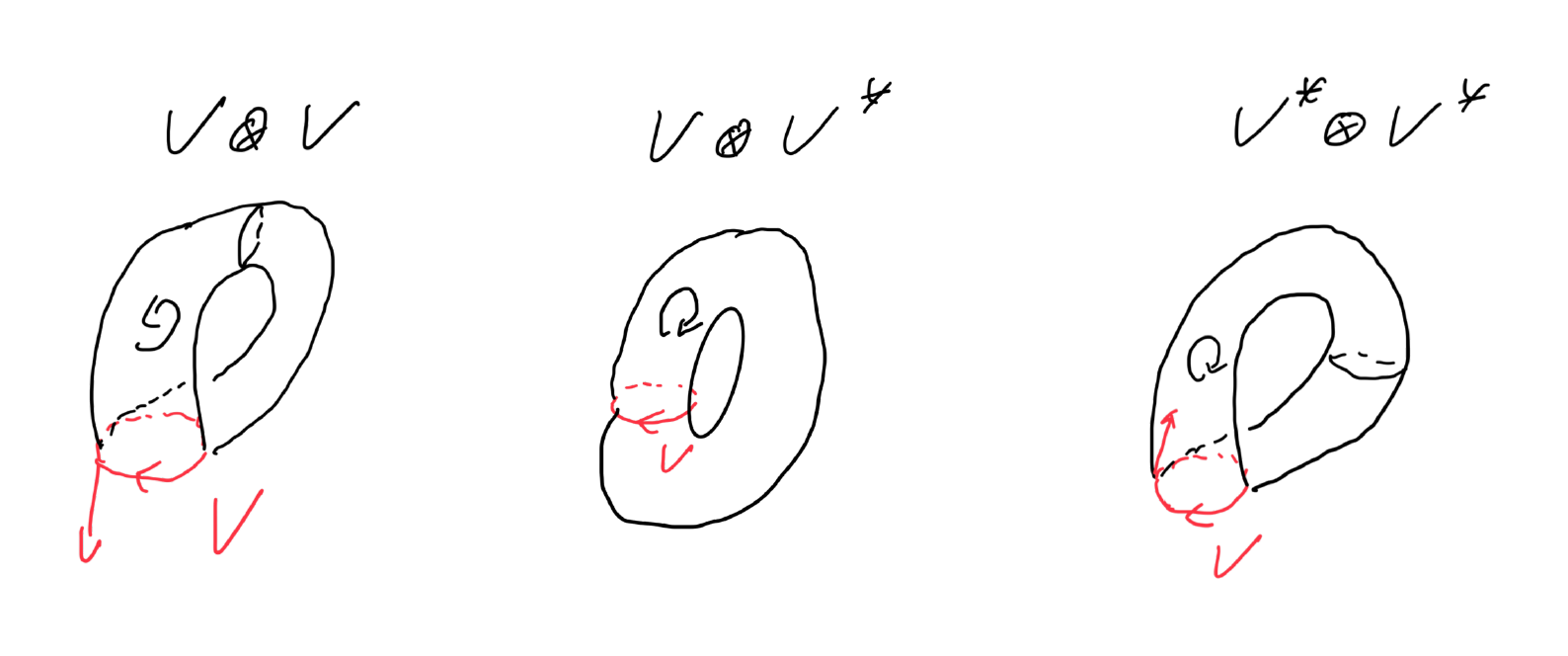}
\end{figure}

Only the middle hyperquiver is a quiver. The other two hyperquivers should be embedded into a higher-dimensional space, in order that the hyperedge does not have self-intersections outside the boundary. 
\label{three}\end{example}

\section{K\"ahler and hyperk\"ahler hyperquiver varieties}
Let $\sQ=(\sV,\sE)$ be a hyperquiver. To any  finite-dimensional  $\sV$-graded complex vector space $V=\bigoplus_{i\in \sV} V_i$ we associate the corresponding representation space
$$R(\sQ,V)=\bigoplus_{e\in \sE}\Bigl( \bigotimes_{i\in s(e)} V_i^\ast\otimes  \bigotimes_{j\in t(e)} V_j\Bigr).$$
The order of the tensor products in $ \bigotimes_{i\in s(e)} V_i^\ast$ and in $ \bigotimes_{j\in t(e)} V_j$ is given by  ordering  the elements of $\sV$. The representation spaces $R(\sQ,V)$ for different orderings are isomorphic to each other.
The group ${\rm GL}(V)=\prod_{i\in \sV}{\rm GL}(V_i)$ acts naturally on  $R(\sQ,V)$.
\par
Suppose now that $V$ is a hermitian vector space ($\sV$-graded, i.e.\ the summands $V_i$, $i\in \sV$, are mutually orthogonal). Then so is $R(\sQ,V)$ and we can form K\"ahler quotients of $R(\sQ,V)$ by the unitary group ${\rm U}(V)=\prod_{i\in \sV}{\rm U}(V_i)$. The moment map for the action of ${\rm U}(V_i)$ is given by
\begin{equation}  T=\sum_{e\in \sE}T_e\stackrel{\mu}{\longmapsto} \frac{\sqrt{-1}}{2} \sum_{\{e; t(e)=i\}} c^i(T_e,T_e)-\,\frac{\sqrt{-1}}{2}\sum_{\{e; s(e)=i\}} c^i(T_e,T_e),\label{moment}\end{equation}
where $c^i$ is the contraction of all indices except $i$ using the hermitian metric. If we choose an orthonormal basis $\{e_l^j\}$ for each $V_j$, and use the dual basis for each $V_j^\ast$, we can write an element $T$ of $W_1\otimes\dots \otimes W_r$, where each $W_s\in \{V_j,V_j^\ast\}$, as a $k\times l$ matrix $A$,  $k=\dim W_{s}$ and $l=\prod_{z\neq s}\dim W_{z}$, and then $c^{s}(T,T)=AA^\ast$.
\par
For any element $\theta$ of the centre of $\fU(V)=\bigoplus_{i\in \sV}\fU(V_i)$, we obtain the corresponding {\em K\"ahler hyperquiver variety} $\mu^{-1}(\theta)/{\rm U}(V)$ (which might be empty). It is a stratified K\"ahler manifold \cite{LS} and also a quasi-projective variety obtained as a good quotient of the locus of semistable points by ${\rm GL}(V)$. Here a point is ccalled semistable (resp.\ polystable) if the closure of its ${\rm GL}(V)$-orbit (resp.\ its ${\rm GL}(V)$-orbit) intersects $\mu^{-1}(\theta)$.
For a detailed description of GIT quotients of vector spaces the reader may consult \cite{Hos}.
\par
We remark that a general result of Boutot \cite{Bou} (see also \cite{Greb} for the analytic version) implies that K\"ahler hyperquiver varieties have rational singularities. In particular, they are normal and Cohen-Macaulay.
\par
Given a hyperquiver $\sQ=(\sV,\sE)$, we can consider, analogously to quivers, the corresponding {\em double hyperquiver} $\tilde\sQ=(\sV,\sE\sqcup \bar\sE)$, where the hyperedges in $\bar\sE$ are the hyperedges in $E$ with the opposite orientation. The representation space  $R(\tilde\sQ,V)$ is canonically identified with $T^\ast R(\sQ,V)$, and a choice of a hermitian metric on $V$ makes it into a quaternion-hermitian vector space. We can then form hyperk\"ahler quotients of $R(\tilde\sQ,V)$ by ${\rm U}(V)$ and obtain {\em hyperk\"ahler hyperquiver varieties}, which are stratified hyperk\"ahler manifolds \cites{DS,Mayr}. We remark that hyperk\"ahler quotients of quaternionic representations of cotangent type are discussed in detail in \cite{Aba}.

\section{Moduli spaces of tensors}

We shall now investigate K\"ahler  hyperquiver varieties corresponding to perhaps the simplest  hyperquiver, with one hyperedge and $m$ vertices: 
\begin{figure}[h]
	\includegraphics[width=0.6\textwidth]{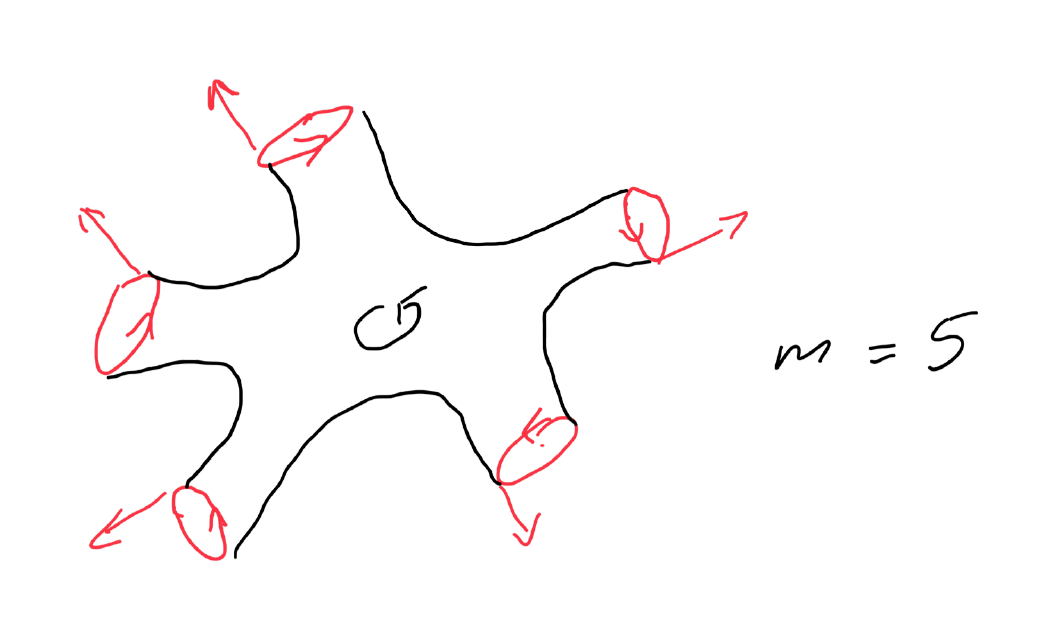}
\end{figure}

We choose the orientations of the hyperedge and of the boundary circles in such way that all vertices are targets. To each vertex we associate $\cx^{k_i}$, $k_i\in \oN$, with its standard hermitian inner product. 
The representation space is then the tensor product of these vector spaces: 
$$R(\sQ,V)=\cx^{k_1}\otimes\dots\otimes \cx^{k_m},$$
and we denote by $\sM_{\vec{k}}$, where $\vec{k}=(k_1,\dots,k_m)$,  the corresponding K\"ahler hyperquiver variety defined in the previous section. Without loss of generality we may assume that each $k_i\ge 2$.
\par
The  torus  $T^{m-1}=\{(t_1,\dots,t_m)\in U(1)^m;t_1\cdots t_m=1\}$ acts trivially on $R(\sQ,V)$ and can be viewed as a subgroup of the centre of $U(V)$. The compact group $K=\prod_{i=1}^m {\rm U}(k_i)/T^{m-1}$ acts effectively on  $R(\sQ,V)$. The Lie algebra $\fK$ of $K$ can be identified with elements $(u_1,\dots,u_m)$ of $\bigoplus_{i=1}^m\fU(k_i)$ such that the central components of the $u_i$, viewed as scalars, are all equal to each other. We consider the K\"ahler quotient $\mu^{-1}(I)/K$ of $R(\sQ,V)$, where $I=\frac{\sqrt{-1}}{2}(1,\dots,1)$. According to \eqref{moment}, the moment map equations for a tensor $T\in \bigotimes_{i=1}^m \cx^{k_i} $ are
\begin{equation} c^i(T,T)=1,\quad i=1,\dots, m.\label{moment2}\end{equation}
These equations imply that a nonempty $\sM_{\vec{k}}$ is compact, i.e.\ projective. In order to determine this variety, observe that we can perform the K\"ahler quotient in two steps: first by the group $\prod_{i=1}^m {\rm SU}(k_i)$, and then by the central circle in $K$. The complex structure of the first quotient is that of the {\em affine quotient} $\Spec\C[R(\sQ,V)]^{G_0}$, where $G_0=\prod_{i=1}^m {\rm SL}_{k_i}(\C)$. The induced action of $\cx^\ast$ on this variety is given by $\lambda.p=\lambda^{\deg p}p$ for any invariant homogeneous polynomial $p\in \C[R(\sQ,V)]^{G_0}$, and it makes $\C[R(\sQ,V)]^{G_0}$ into a (nonnegatively) graded ring.  
The following description of $\sM_{\vec{k}}$ follows easily from the GIT-theory:
\begin{proposition}  $\sM_{\vec{k}}$ is isomorphic to ${\rm Proj}\,\C[R(\sQ,V)]^{G_0}$, i.e.\ to the geometric quotient of $\Spec\C[R(\sQ,V)]^{G_0}\setminus\{0\}$ by the above $\C^\ast$-action.\qed\label{steps}
\end{proposition}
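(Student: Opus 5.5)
The plan is to run the Kähler quotient in stages and use the Kempf--Ness correspondence at each stage. Write $G=\prod_{i=1}^m{\rm GL}_{k_i}(\C)$ and $G_0=\prod_i{\rm SL}_{k_i}(\C)$. Then $G/(\text{kernel of the action})$ is the complexification $K^\C$ of $K$, and its image is generated by $G_0$ together with the central $\C^\ast$ acting on $R=R(\sQ,V)$ by scalar multiplication. The character of $K^\C$ that corresponds to $\theta=I$ restricts to $\C^\ast$ with positive weight; the sign is fixed by the convention in \eqref{moment}, since all vertices are targets.

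By the general theory recalled in Section 2 (see \cite{Hos}), $\mu^{-1}(I)/K$ is isomorphic to the GIT quotient $R/\!\!/_\chi K^\C={\rm Proj}\bigoplus_{n\ge0}\C[R]^{K^\C,\chi^n}$. So the task is to identify the semi-invariants. A polynomial $p$ is $\chi^n$-semi-invariant exactly when it is $G_0$-invariant and homogeneous, with degree determined by $n$ through the weight of $\chi$ on the central circle. Hence $\bigoplus_n\C[R]^{K^\C,\chi^n}$ is, up to regrading, a Veronese-type subring of the graded ring $A=\C[R]^{G_0}$ (with $\C^\ast$ weight equal to degree). One point needs checking here: every homogeneous $G_0$-invariant must have degree divisible by the relevant weight. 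The scalars in $G_0$ are the tuples $(\zeta_1,\dots,\zeta_m)$ with $\zeta_i^{k_i}=1$, and they act on $R$ by $\prod\zeta_i$. This forces the degrees of invariants to be divisible by every $k_i$. Since ${\rm Proj}$ is unchanged under passing to Veronese subrings, we get ${\rm Proj}$ of the semi-invariant ring $\cong{\rm Proj}\,A$.

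An alternative route, which is also the second description in the proposition, goes directly in two steps. First, $\mu_{SU}^{-1}(0)/\prod{\rm SU}(k_i)$ is homeomorphic to $\Spec A$ by Kempf--Ness for the reductive group $G_0$ acting linearly, and this identification is compatible with the residual circle action. Next, the remaining moment map for the central circle is $\frac12|T|^2$ up to constants. Its level set meets every nonzero $\C^\ast$-orbit in $\Spec A$ in exactly one circle orbit, because the function $|\lambda T|^2$ is strictly monotone in $|\lambda|$. The point $0\in\Spec A$ corresponds to the nullcone of $G_0$ and must be removed. The $\C^\ast$-action on $\Spec A\setminus\{0\}$ has positive weights, so all orbits are closed there and the geometric quotient exists and equals ${\rm Proj}\,A$.

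The main obstacle is making the two-stage reduction rigorous. The first-stage quotient is singular, so "Kähler quotient in stages" has to be read in the stratified sense \cite{LS}. It is cleaner to avoid stages altogether and argue directly with the one-step GIT identification above. That argument only requires checking which points are $\chi$-semistable: precisely those $T$ that are not in the $G_0$-nullcone. This follows from the Hilbert--Mumford criterion, or simply from the description of semi-invariants as positive-degree $G_0$-invariants.
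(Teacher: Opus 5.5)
Your proof is correct. Your second route is the one the paper has in mind. The paragraph before the proposition reduces first by $\prod_i{\rm SU}(k_i)$, identifies that quotient with $\Spec\C[R(\sQ,V)]^{G_0}$ carrying the induced $\C^\ast$-action, and then says the result follows from GIT. Your observations that $0$ is the image of the $G_0$-nullcone, and that $\|T\|^2={\rm const}$ meets each nonzero $\C^\ast$-orbit in a single circle orbit, are exactly the details the paper leaves implicit.

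Your preferred first route is genuinely different. You never form the singular intermediate quotient. Instead you apply King's theorem to all of $K^\C$ with the character $\chi$ determined by the level. You then recognise the $\chi$-semi-invariants as (a Veronese subring of) $A=\C[R(\sQ,V)]^{G_0}$, using only that $K^\C$ is generated by the image of $G_0$ and the scalar $\C^\ast$, on which $\chi$ has positive weight. This gives an isomorphism of varieties from a single standard theorem, avoids reduction in stages on a stratified space, and exhibits the semistable locus as the complement of the $G_0$-nullcone. The two-step picture is more geometric and explains why grading by degree is natural. However, upgrading it from a homeomorphism to an isomorphism of complex spaces needs the extra justification you point out.

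One point you share with the paper should be made explicit. A character of $K^\C$ ``corresponding to $\theta=I$'' exists only after replacing $I$ by its restriction to $\fK$. That means reading the moment map equations as $c^i(T,T)=\frac{\|T\|^2}{k_i}\,{\rm Id}$ for all $i$, together with a fixed positive value of $\|T\|^2$. This is also the level set your two-step route uses ($\mu_{{\rm SU}}=0$, $\|T\|^2$ fixed).

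Taken literally, $c^i(T,T)=1$ gives $\|T\|^2={\rm tr}\,c^i(T,T)=k_i$ for every $i$. So $\mu^{-1}(I)$ is empty unless all $k_i$ coincide, whereas ${\rm Proj}\,\C[R(\sQ,V)]^{G_0}$ need not be empty. For example, take $\vec k=(2,2,3)$. The determinant of the Gram matrix of the three slices in $\C^2\otimes\C^2$, with respect to the ${\rm SL}_2\times{\rm SL}_2$-invariant quadratic form, is a nonzero invariant of degree $6$. With the restricted level, your argument goes through as written.
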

\begin{example}
The case of $\vec{k}=(3,3,3)$ has been studied in detail by Ng \cites{Ng1,Ng2}. In particular Ng has shown that the surface $\sM_{(3,3,3)}$ is rational. We can identify this surface explicitly, using the above proposition. It is namely known \cite{Vinb}, that the ring of ${\rm SL}_{3}(\C)$-invariants on $\cx^3\otimes\cx^3\otimes \cx^3$ is freely generated in degrees $6,9$, and $12$. Therefore 
$$\sM_{(3,3,3)}\simeq \oP^2(6,9,12)\simeq \oP^2(2,3,4)\simeq \oP^2(1,3,2).$$
\end{example}
Observe that equations \eqref{moment2} imply
 the row rank of the $k_i\times (\prod_{j\neq i}k_j)$ matrix $A_i$ defined after \eqref{moment} equals $k_i$. Thus the multilinear rank\footnote{The multilinear rank of a tensor $T\in V_1\otimes\dots\otimes V_m$ is the $m$-tuple of dimensions $\dim\{T(v); v\in V_i^\ast\}$.} of a tensor $T\in \mu^{-1}(I)$ must be equal to $(k_1,\dots,k_m)$. It is clear (cf.\ \cite[Ex. 2.4.2]{Land}) that the multilinear rank $(r_1,\dots,r_m)$ of a tensor in $\bigotimes_{i=1}^m \cx^{k_i} $ satisfies $r_i\leq \min(k_i,\prod_{j\neq i}k_j)$, $i=1,\dots,m$.
 Hence:
\begin{lemma} If $k_i> \prod_{j\neq i}k_j$ for some $i\in\{1,\dots,m\}$, then $\sM_{\vec{k}}$ is empty.\qed
\end{lemma}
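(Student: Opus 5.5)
The plan is to show that the moment map equations \eqref{moment2} have no solution, so that $\mu^{-1}(I)$, and hence $\sM_{\vec{k}}=\mu^{-1}(I)/K$, is empty.

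Fix the index $i$ with $k_i>\prod_{j\neq i}k_j$. Write $N_i=\prod_{j\neq i}k_j$, and suppose for contradiction that there is a tensor $T\in\bigotimes_{j=1}^m\cx^{k_j}$ satisfying \eqref{moment2}. Using the orthonormal bases as in the remark after \eqref{moment}, flatten $T$ along the $i$-th factor. This gives a $k_i\times N_i$ matrix $A_i$ with $c^i(T,T)=A_iA_i^\ast$.

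The $i$-th equation in \eqref{moment2} then reads $A_iA_i^\ast=1_{k_i}$, where $1_{k_i}$ is the $k_i\times k_i$ identity matrix. Taking ranks gives
$$k_i={\rm rank}(A_iA_i^\ast)\le {\rm rank}(A_i)\le \min(k_i,N_i)=N_i<k_i,$$
which is a contradiction. Equivalently, in the language of multilinear rank: any $T\in\mu^{-1}(I)$ has multilinear rank $(k_1,\dots,k_m)$, while the bound $r_i\le\min(k_i,\prod_{j\ne i}k_j)$ quoted from \cite{Land} gives $r_i\le N_i<k_i$.

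The only point that needs care is the identification $c^i(T,T)=A_iA_i^\ast$ and the scalar normalisation. The moment map value $I=\frac{\sqrt{-1}}2(1,\dots,1)$ must correspond, via \eqref{moment} with all vertices targets, exactly to $c^i(T,T)=1$. This is already recorded in \eqref{moment2}, and in any case the rank argument works for any nonzero scalar multiple of the identity, so the exact normalisation does not matter. Since $\mu^{-1}(I)=\emptyset$, the quotient is empty. Alternatively, via Proposition \ref{steps}, every tensor is unstable; but the direct moment map argument is simpler and I would use that.
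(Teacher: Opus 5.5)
Your proposal is correct and is essentially the paper's argument: the paper also observes that \eqref{moment2} forces the flattening $A_i$ to have row rank $k_i$ (phrased as $T$ having multilinear rank $(k_1,\dots,k_m)$), which is impossible because $A_i$ has only $\prod_{j\neq i}k_j<k_i$ columns. Your remark that the argument only needs $c^i(T,T)$ to be invertible is worth keeping. Taking traces in \eqref{moment2} as literally written gives $\|T\|^2=k_i$ for every $i$, so for the quotient by $K$ these equations are best read as saying that $c^i(T,T)$ is a positive multiple of the identity.
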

We therefore adopt the following definition:
\begin{definition} A dimension vector $\vec{k}=(k_1,\dots,k_m)\in \oN^m$ is said to be {\em balanced}, if $k_i \leq \prod_{j\neq i}k_j$ for each $i=1,\dots,m$.  
\end{definition}
\begin{corollary} If $\vec{k}$ is unbalanced, then $0$ is the only closed orbit of $\prod_{i=1}^m{\rm SL}_{k_i}(\C)$ on $\bigotimes_{i=1}^m \cx^{k_i}$.
\end{corollary}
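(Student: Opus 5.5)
We will combine the Kempf--Ness theorem with the preceding lemma. Write $G_0=\prod_{i=1}^m{\rm SL}_{k_i}(\C)$ and $K_0=\prod_{i=1}^m{\rm SU}(k_i)$. The moment map for $K_0$ on $R=\bigotimes_i\cx^{k_i}$ is the projection of \eqref{moment} onto the traceless parts, so $\mu_{K_0}(T)=0$ exactly when each $c^i(T,T)$ is a scalar multiple of the identity, say $c^i(T,T)=\lambda_i\,1_{k_i}$. By Kempf--Ness, an orbit $G_0\cdot T$ is closed if and only if it meets $\mu_{K_0}^{-1}(0)$. So it suffices to show that every $T\in\mu_{K_0}^{-1}(0)$ is zero.

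Given $T\in\mu_{K_0}^{-1}(0)$, we first observe that all the $\lambda_i$ are equal. Indeed, ${\rm tr}\,c^i(T,T)=\|T\|^2$ for every $i$, since contracting all indices gives the squared norm regardless of which index is left out. Hence $\lambda_ik_i=\|T\|^2$, i.e. $\lambda_i=\|T\|^2/k_i$. In particular, if $T\neq0$ then every $\lambda_i>0$. Rescaling $T$ by a positive real number keeps it in $\mu_{K_0}^{-1}(0)$ and multiplies each $\lambda_i$ by the same factor.

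The one point needing care is that the $\lambda_i$ differ with $i$ (they equal $\|T\|^2/k_i$), so we cannot simply rescale to reach $\mu^{-1}(I)$. We avoid this by applying the rank argument directly rather than invoking the lemma verbatim. If $T\neq0$, then $A_iA_i^\ast=c^i(T,T)=\lambda_i1_{k_i}$ with $\lambda_i>0$, so the $k_i\times\prod_{j\neq i}k_j$ matrix $A_i$ has rank $k_i$ for every $i$. This forces $k_i\le\prod_{j\neq i}k_j$ for all $i$, i.e. $\vec k$ is balanced, which is exactly the argument given before the lemma. Hence, if $\vec k$ is unbalanced, $\mu_{K_0}^{-1}(0)=\{0\}$, and so $\{0\}$ is the only closed $G_0$-orbit.

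Alternatively, one could remove the discrepancy by a diagonal rescaling $T\mapsto\prod_i s_i\,T$ coming from ${\rm GL}$, but the direct rank argument above is cleaner.
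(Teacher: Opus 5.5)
Your proof is correct, but it takes a different route from the paper. The paper argues globally. By the preceding lemma $\sM_{\vec{k}}$ is empty, so by Proposition~\ref{steps} the graded ring $\C[R(\sQ,V)]^{G_0}$ has empty ${\rm Proj}$. It therefore has no nonconstant invariants, so the affine quotient is a point and its unique closed orbit must be $\{0\}$.

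You instead apply the Kempf--Ness criterion directly to the semisimple group $G_0$, at the zero level of the $\prod_i{\rm SU}(k_i)$ moment map. You then run the same rank obstruction that underlies the lemma. The shared core is the observation that if $A_iA_i^\ast$ is a positive multiple of the identity, then ${\rm rank}\,A_i=k_i\le\prod_{j\ne i}k_j$.

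The paper's version is a one-liner given the machinery already set up, but it leans on Proposition~\ref{steps}, which is stated without detailed proof, and on how the level set $\mu^{-1}(I)$ is normalized. Your version needs only the standard Kempf--Ness theorem and is insensitive to that normalization. You were right to be wary of it. On any nonzero tensor with all $c^i(T,T)=\lambda_i 1_{k_i}$ one has $\lambda_i=\|T\|^2/k_i$, so the literal equations \eqref{moment2} can hold simultaneously only when all $k_i$ coincide.

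Two small slips should be fixed. First, your sentence ``all the $\lambda_i$ are equal'' is false and contradicts your next line. What is independent of $i$ is $k_i\lambda_i=\|T\|^2$. Nothing breaks, since you only use $\lambda_i>0$.

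Second, the closing alternative does not work. The rescaling $T\mapsto(\prod_i s_i)T$ multiplies every $c^i(T,T)$ by the same factor $|\prod_i s_i|^2$. More generally, the trace identity gives $\lambda_i/\lambda_j=k_j/k_i$ for every tensor with all $c^i$ scalar. So no element of ${\rm GL}(V)$ could remove the discrepancy, and you should simply delete that remark.
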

\begin{proof} Since $\sM_{\vec{k}}$ is empty, Proposition \ref{steps} implies that the the affine quotient $\Spec\C[R(\sQ,V)]^{G_0}$ is a point.
\end{proof}
For a balanced $\vec{k}$, we may ask whether $\sM_{\vec{k}}$ is rational. The answer is provided by:
\begin{proposition} Let $\vec{k}$ be balanced. The variety $\sM_{\vec{k}}$  is rational if and only if the generic stabiliser of $K^\C$ is finite. In this case: \begin{equation*} \dim_\C \sM_{\vec{k}}=\prod_{i=1}^m k_i-\sum_{i=1}^m k_i^2+(m-1).\label{dim}\end{equation*}
\end{proposition}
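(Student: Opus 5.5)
The plan is to compute $\dim\sM_{\vec{k}}$ from Proposition~\ref{steps} and then prove rationality by a reduction to small cases via castling transforms.

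\emph{Dimension.} Since $\sM_{\vec{k}}$ is a projective variety, "rational" should also force $\dim\sM_{\vec{k}}\ge 0$, i.e.\ $\sM_{\vec{k}}\neq\emptyset$. By Proposition~\ref{steps}, $\sM_{\vec{k}}$ is birational to the rational quotient $R(\sQ,V)/K^\C$, where $K^\C=\prod_i{\rm GL}_{k_i}(\C)/(\C^\ast)^{m-1}$. By Rosenlicht, and by the fact that generic orbits are closed whenever generic stabilisers are reductive (Popov), this gives
$$\dim\sM_{\vec{k}}=\prod_i k_i-\dim K^\C+\dim H,$$
with $H$ a generic stabiliser. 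Here $\dim K^\C=\sum k_i^2-(m-1)$, so the displayed formula holds exactly when $H$ is finite. I will first check this direction carefully, using balancedness to guarantee that polystable points with $c^i(T,T)=1$ exist.

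\emph{Castling reduction.} Write $N_i=\prod_{j\ne i}k_j$ and view $T$ as an injective map $(\C^{k_i})^\ast\to \bigotimes_{j\neq i}\C^{k_j}$; injectivity holds generically by balancedness. Its ${\rm GL}_{k_i}$-orbit is determined by the image, a point of ${\rm Gr}(k_i,N_i)$. Hence $R(\sQ,V)/K^\C$ is birational to ${\rm Gr}(k_i,N_i)/\prod_{j\neq i}{\rm GL}_{k_j}$, taken modulo the scalars. Since ${\rm Gr}(k_i,N_i)\cong{\rm Gr}(N_i-k_i,N_i)$ equivariantly (via the dual), the dimension vectors $\vec{k}$ and $\vec{k}'$, where $k_i$ is replaced by $N_i-k_i$, give birational $\sM$'s. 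They also have isomorphic generic stabilisers, which is the Sato--Kimura castling equivalence. Repeatedly applying castling to a coordinate with $k_i>N_i/2$ strictly decreases $\sum k_i$. So every balanced $\vec{k}$ reduces to a \emph{castling-reduced} vector, with $2k_i\le N_i$ for all $i$ or with some $k_i=N_i$ (or $N_i-1$).

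\emph{Reduced cases.} In the reduced cases I would argue directly.

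If $k_i=N_i$ (or $k_i=N_i-1$), the action is prehomogeneous and $\sM_{\vec{k}}$ is a point. A point is rational, but the generic stabiliser is then positive-dimensional unless $\dim$ is exactly $0$. This is where the correct reading of the ``only if'' has to be pinned down, presumably by showing that in all infinite-stabiliser reduced cases $\sM$ is empty or of dimension $<$ the expected one.

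If $2k_i\le N_i$ for all $i$, the classical results (Elashvili, Popov) on generic stabilisers of $\prod{\rm SL}_{k_i}$ on $\bigotimes\C^{k_i}$ show the generic stabiliser is finite, except for a short list such as $(2,2,2)$ and $(2,2,3)$, which I will check by hand. For rationality of ${\rm Gr}(k_m,N_m)/\prod_{j<m}{\rm GL}_{k_j}$ I would use the no-name/slice method: find a subgroup acting generically freely with a rational quotient and an equivariant vector bundle structure, in the spirit of Bogomolov--Katsylo. Then I would induct on $m$ by fixing the last factor, as in the proof for $(3,3,3)$.

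\emph{Main obstacle.} I expect the rationality in the reduced finite-stabiliser case to be the main obstacle, since rationality of quotients by non-solvable groups is genuinely hard. There I would first try to exhibit an explicit rational section (a Luna-type slice) for the largest factor, reducing to a linear action of a smaller product group on a vector space.
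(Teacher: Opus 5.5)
\textbf{Gap: neither implication is proved, and the converse cannot be.}

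The paper's proof is just a citation. The equivalence is attributed to Rosenlicht's theorem (Popov--Vinberg, Corollary to Lemma 2.4), and the formula is the count $\dim R(\sQ,V)-\dim K^\C$. Your dimension paragraph is essentially that count and is fine. Popov's closed-orbit criterion is not needed for it: $G_0$ has no characters and $\C[R(\sQ,V)]$ is factorial, so Proposition~\ref{steps} gives $\C(\sM_{\vec{k}})=\C(R(\sQ,V))^{K^\C}$, whose transcendence degree is $\prod k_i-\dim K^\C+\dim H$ by Rosenlicht.

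For ``$H$ finite $\Rightarrow$ rational'' you reduce by castling and then stop at the step that carries all the content. In the reduced case you only name techniques: a generically free subgroup, a Bogomolov--Katsylo bundle, induction on $m$. You never produce the subgroup, the slice or the equivariant bundle, so nothing in your argument makes ${\rm Gr}(k_i,N_i)/\prod_{j\neq i}{\rm GL}_{k_j}$ rational. The $(3,3,3)$ case is not a model for an induction on $m$ either. It rests on that particular invariant ring being free, so that $\sM_{(3,3,3)}$ is a weighted projective plane, and no such description is available in general.

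For ``rational $\Rightarrow$ $H$ finite'' your way out, that infinite-stabiliser cases are empty or of dimension below the expected one, cannot work. When $\sM_{\vec{k}}\neq\emptyset$ its dimension is the displayed value \emph{plus} $\dim H$, and such nonempty rational cases do occur.
For $\vec{k}=(2,2,2)$, the tensor $e_1\otimes e_1\otimes e_1+e_2\otimes e_2\otimes e_2$ satisfies \eqref{moment2}, and the invariant ring is generated by Cayley's hyperdeterminant. So $\sM_{(2,2,2)}$ is a point, while $\dim H\ge\dim K^\C-\dim R(\sQ,V)=2$.
For $\vec{k}=(2,4,4)$, a generic pencil $sA+tB$ is equivalent to $(1,D)$ with $D$ diagonal, and its stabiliser in $K^\C$ is $3$-dimensional. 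Hence $\sM_{(2,4,4)}$ has dimension $32-34+3=1$. Its function field lies in $\C(R(\sQ,V))$, so it is $\oP^1$ by L\"uroth.
So with ``rational'' in its usual sense the converse is false as stated, and you must first settle what is actually being claimed.

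Some auxiliary claims are also wrong.
\begin{itemize}
\item Balancedness does not give $\sM_{\vec{k}}\neq\emptyset$. For the balanced $(2,3,5)$, the ${\rm SL}_5$-invariants form a polynomial ring on the six maximal minors. Their span is $\C^2\otimes\C^3$ as an ${\rm SL}_2\times{\rm SL}_3$-module, which has no nonconstant invariants. Hence $\C[R(\sQ,V)]^{G_0}=\C$ and $\sM_{(2,3,5)}=\emptyset$.
\item Consequently your castling reduction can terminate at unbalanced vectors, here $(2,3,1)$, which your list of castling-reduced cases omits.
\item Taken literally, \eqref{moment2} forces $\|T\|^2=k_i$ for every $i$ (take traces), so it has no solutions unless all $k_i$ coincide. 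You should work with Proposition~\ref{steps} instead.
\item $k_i=N_i-1$ does not give a prehomogeneous action, because castling to $k_i'=1$ merely deletes a factor. For example $(26,3,3,3)\sim(3,3,3)$, and already $\dim K^\C=700<702=\dim R(\sQ,V)$.
\end{itemize}
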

\begin{proof} The first statement follows from a theorem of Rosenlicht: see, e.g.\ the Corollary to Lemma 2.4 in \cite{PV}. The second statement is obvious.
\end{proof}
A sufficient condition is given by:
\begin{proposition} If the dimension vector  $\vec{k}=(k_1,\dots,k_m)\in \oN^m$ satisfies $2k_i < \prod_{j\neq i}k_j$ for each $i=1,\dots,m$, then the generic stabiliser of $K^\C$ acting on $R(\sQ,V)$ is finite. 
\end{proposition}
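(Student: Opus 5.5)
We have to show that a generic tensor $T\in\bigotimes_{i=1}^m\cx^{k_i}$ has finite stabiliser in $K^\C=\prod_i{\rm GL}_{k_i}(\C)/(\C^\ast)^{m-1}$. Equivalently, the stabiliser Lie algebra of a generic $T$ in $\bigoplus_i\mathfrak{gl}(k_i)$ should be only the $(m-1)$-dimensional kernel of the action, namely the tuples $(a_11,\dots,a_m1)$ with $\sum a_i=0$. The plan is to argue by induction on $m$, reducing to the case $m=2$ after grouping factors, and to use a dimension count to guarantee that a generic tensor is "nondegenerate" in a suitable sense.

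The first step is infinitesimal. An element $(X_1,\dots,X_m)$ annihilates $T$ exactly when $\sum_i (1\otimes\dots\otimes X_i\otimes\dots\otimes 1)T=0$. Group the factors as $V_i\otimes W_i$ with $W_i=\bigotimes_{j\ne i}\cx^{k_j}$ and $n_i=\dim W_i=\prod_{j\neq i}k_j$. Then $T$ becomes a $k_i\times n_i$ matrix $A_i$ and the equation reads
$$X_iA_i+A_i\,Y_i=0,$$
where $Y_i$ is the operator induced on $W_i$ by the other $X_j$. Now use the hypothesis $2k_i<n_i$. For generic $T$ the row space $U_i\subset W_i$ of $A_i$ is a generic $k_i$-dimensional subspace of the $n_i$-dimensional space $W_i$. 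The equation says that $Y_i$ (or its transpose, depending on conventions) preserves $U_i$ and acts on it through $-X_i$ transported by $A_i$.

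The key claim, and the main obstacle, is this: for a generic $k_i$-plane $U_i\subset W_i=\bigotimes_{j\ne i}\cx^{k_j}$ with $2k_i<n_i$, any operator of the form $Y=\sum_{j\ne i}1\otimes X_j\otimes 1$ preserving $U_i$ must be a scalar. I would prove it by a dimension count. The set of such $Y$ is a linear space $L$ of dimension at most $\sum_{j\ne i}k_j^2-(m-2)$. For a nonscalar $Y$, the $k_i$-planes it preserves form a subvariety of ${\rm Gr}(k_i,n_i)$ of codimension roughly $k_i(n_i-k_i)-\dim\{\text{invariant planes}\}$. I would first do this for a semisimple $Y$ with at least two distinct eigenvalues and then handle nilpotent parts by semicontinuity. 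The condition $2k_i<n_i$, together with the eigenspace structure of $Y$ (every eigenspace of a Kronecker sum has dimension at most $n_i/2$ unless $Y$ is scalar), should make this codimension exceed $\dim L$. Hence a generic $U_i$ is preserved only by scalars.

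If this count is delicate, the fallback is to exhibit one explicit tensor with trivial stabiliser modulo the kernel and then conclude by upper semicontinuity of stabiliser dimension. A natural candidate is a generic sum of roughly $\max_i k_i$ rank-one tensors, and the stabiliser can be computed from the uniqueness of such decompositions (Kruskal-type). This needs $\prod_{j\ne i}k_j$ large relative to $k_i$, which is exactly what $2k_i<n_i$ provides.

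Granting the claim, the conclusion follows quickly. For each $i$ we get $Y_i=c_i\cdot 1$ on $W_i$, and then $X_iA_i=-c_iA_i$. Since $A_i$ has full row rank $k_i$ for generic $T$, this gives $X_i=-c_i\cdot 1$. So every $X_i$ is scalar, and the stabiliser equation forces $\sum_i X_i=0$ as scalars. Thus the stabiliser Lie algebra equals the kernel of the action, so the generic stabiliser in $K^\C$ is finite. Because having finite stabiliser is an open condition, it suffices to verify all of this on a nonempty Zariski open set.
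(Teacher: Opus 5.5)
The gap is your key claim. As formulated, the claim and the count you sketch for it use only $2k_i<n_i$ at the single index $i$, and in that form the claim is false. Take $\vec k=(2,2,5)$ and $i=1$, so that $2k_1=4<10=n_1$. For any $a_1,a_2\in\cx^2\otimes\cx^5$ the $\cx^5$-components span a subspace $S$ of dimension at most $4$. Hence $Y=1\otimes X_3$ with $0\neq X_3$ and $X_3|_S=0$ is a nonscalar element of $L$ annihilating $U_1$. For a balanced example take $\vec k=(2,2,4)$, $i=1$, where $\dim\prod_i{\rm SL}_{k_i}=21>16=\dim R(\sQ,V)$ forces positive-dimensional stabilisers.

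Under the full hypothesis the claim is true, but only because it is equivalent to the result. Your last paragraph shows that the claim at one index already gives finiteness. Conversely, since $A_i$ has full row rank, every $Y\in L$ preserving $U_i$ determines a unique $X_i$ with $X_iA_i+A_iY=0$, i.e.\ a stabilising tuple. So the claim at any single index is \emph{equivalent} to finiteness of the generic stabiliser, and all the content sits in the dimension count you leave at ``should''.

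The heuristic you offer for that count is false. $Y=1\otimes{\rm diag}(1,0,\dots,0)\otimes 1$ (acting on the $j$-th factor) is a nonscalar Kronecker sum whose $0$-eigenspace has dimension $(1-1/k_j)\,n_i>n_i/2$ once $k_j\ge 3$. Semicontinuity also runs the wrong way for nilpotent $Y$: the variety of $Y$-invariant $k_i$-planes jumps \emph{up} under specialisation, so nilpotent classes need their own count.

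The fallback does not repair this. With $R=\max_i k_i$ rank-one terms, every factor with $k_i=R$ may act by an arbitrary diagonal matrix in the basis $a^i_1,\dots,a^i_R$, subject only to $\prod_i\lambda_{i,r}=1$. For $\vec k=(3,3,3)$ this leaves a stabiliser of dimension at least $4$ modulo the kernel. With $R=4$, Kruskal's condition $\sum_i\min(R,k_i)\ge 2R+m-1$ fails ($9<10$). Moreover, generic rank-$4$ tensors in $\cx^3\otimes\cx^3\otimes\cx^3$ really do have infinitely many decompositions, because $\sigma_4(\oP^2\times\oP^2\times\oP^2)$ is defective. More generally, Kruskal-type conditions are additive in the $k_i$ and are not implied by $2k_i<\prod_{j\neq i}k_j$; for instance they fail for $\vec k=(7,2,2,2,2)$ with $R=7$.

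What is actually needed is a count over all nonzero semisimple and nilpotent $x\in\bigoplus_i\mathfrak{sl}_{k_i}$, using that the Jordan parts of a stabilising element again stabilise. Running over the finitely many types of such $x$, one must show that the tensors annihilated by some such $x$ form a proper subvariety, and this has to use the hypothesis at every index simultaneously. That count is precisely the Andreev--Vinberg--\`Elashvili theorem the paper invokes. Restricted to ${\rm SL}_{k_i}$, $R(\sQ,V)$ is $n_i$ copies of the standard representation, of index $n_i/(2k_i)$. Index $>1$ on every simple factor, which is exactly $2k_i<n_i$ for all $i$, forces the generic stabiliser of $G_0$ to have trivial Lie algebra.
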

\begin{proof} We can equivalently ask that the generic stabiliser of $\prod_{i=1}^m {\rm SL}_{k_i}(\C)$ be finite. The stated condition follows easily from the main theorem in \cite{AVE}.
\end{proof}

\subsection{Other hyperquivers with a single hyperedge}
Consider a hyperquiver with a single edge and $m$ vertices with multiplicities $l_1,\dots,l_m$. For each of these vertices let $l_i^-$ (resp.\ $l_i^+)$) be the number of times the $i$-th vertex is a source (resp.\ a target).
Let $\vec{k}=(k_1,\dots,k_m)$ be the dimension vector. The corresponding representation space is then
\begin{equation} \bigotimes_{i=1}^m(\C^{k_i})^{\otimes l_i^+}\otimes  \bigl((\C^{k_i}\bigr)^\ast)^{\otimes l_i^-},\label{newQ}\end{equation}
and the hyperquiver variety is obtained as the K\"ahler quotient by (the non-effective) $\prod {\rm U(k_i)}$, with each ${\rm U}(k_i)$ acting diagonally on $(\C^{k_i})^{\otimes l_i^+}\otimes  \bigl((\C^{k_i}\bigr)^\ast)^{\otimes l_i^-}$.
\par
Let us choose again $\theta=I$ and suppose that $\mu(T)=I$, where $\mu$ is moment map \eqref{moment}. If we add together the traces of $-\sqrt{-1}\mu_i(T)$, where $\mu_i$ is the moment map for ${\rm U(k_i)}$, we shall obtain:
$$\|T\|^2 \sum_{i=1}^m (l_i^+-l_i^-)=\sum_{i=1}^m k_i$$
Therefore the hyperquiver variety is empty if $N= \sum_{i=1}^m (l_i^+-l_i^-)$ is $\leq 0$, and projective if $N>0$.
Its complex structure is given by the ${\rm Proj}$ of the subring of the ring of invariants of $\prod_{i=1}^m{\rm SL}_{k_i}(\C)$, generated by the constants and by invariant homogeneous polynomials of positive degree with respect to  the diagonal $\cx^\ast$-action.
\par
  We can also apply again the main result of \cite{AVE} to conclude that the generic stabiliser for the action of $\prod_{i=1}^m{\rm SL}_{k_i}(\C)$ on the representation space is finite, provided
$2k_i< \prod_{j\neq i}k_j^{l_j}$ whenever $l_i=1$.

\end{document}